\newcommand{\R}{\mathbb{R}}
\newcommand{\T}{\intercal}
\newcommand{\e}{e}
\newcommand{\D}{d}
\DeclareMathOperator{\tr}{tr}
\DeclareMathOperator*{\argmax}{arg\,max}
\newtheoremstyle{thm}{\topsep}{\topsep}{\normalfont \itshape}{}{\normalfont \bfseries}{}{\newline}{}
\theoremstyle{thm}
\newtheorem{lemma}{Lemma}
\newtheorem{remark}{Remark}
\newtheorem*{errorin}{Error Indicator}
\definecolor{darkgreen}{RGB}{0,128,0}
\begin{document}

\title{Cross-Gramian-Based Dominant Subspaces}



\author{Peter~Benner\thanks{Computational Methods in Systems and Control Theory, Max Planck Institute for \newline Dynamics of Complex Technical Systems, Sandtorstr. 1, 39106 Magdeburg, Germany; \newline Faculty of Mathematics, Otto von Guericke University Magdeburg, Universit\"atsplatz 2, 39106 Magdeburg, Germany; ORCID: 0000-0003-3362-4103, \url{benner@mpi-magdeburg.mpg.de}} \and
        Christian~Himpe\thanks{Computational Methods in Systems and Control Theory, Max Planck Institute for \newline Dynamics of Complex Technical Systems, Sandtorstr. 1, 39106 Magdeburg, Germany; \newline ORCID: 0000-0003-2194-6754, \url{himpe@mpi-magdeburg.mpg.de}}
}


\date{}

\maketitle

\begin{abstract}
A standard approach for model reduction of linear input-output systems is balanced truncation,
which is based on the controllability and observability properties of the underlying system.
The related dominant subspaces projection model reduction method similarly utilizes these system properties,
yet instead of balancing, the associated subspaces are directly conjoined.
In this work we extend the dominant subspace approach by computation via the cross Gramian for linear systems,
and describe an a-priori error indicator for this method.
Furthermore, efficient computation is discussed alongside numerical examples illustrating these findings.

~\\

\textbf{Keywords:} Controllability, Observability, Cross Gramian, Model Reduction, Dominant Subspaces, HAPOD, DSPMR

~\\

\textbf{MSC:} 93A15, 93B11, 93B20
\end{abstract}




\section{Introduction}
Input-output systems map an input function to an output function via a dynamical system.
The input excites or perturbs the state of the dynamical system and the output is some transformation of the state.
Typically, these input and output functions are low-dimensional while the intermediate dynamical system is high(er)-dimensional.
In applications from natural sciences and engineering,
the dimensionality of the dynamical system may render the numerical computation of outputs from inputs excessively expensive or at least demanding.

Model reduction addresses this computational challenge by algorithms that provide surrogate systems,
which approximate the input-output mapping of the original system with a low(er)-dimensional intermediate dynamical system.
Practically, the trajectory of the dynamical system's state is constrained to a subspace of the original system's state-space,
for example by using truncated projections.

\pagebreak

A standard approach for projection-based model reduction of input-output systems is balanced truncation \cite{morMoo81},
which transforms the state-space unitarily to a representation that is sorted (balanced) in terms of the input's effect on the state (controllability)
as well as the state's effect on the output (observability) and discards (truncates) the least important states according to this measure.

Instead of balancing, this work investigates a dominant subspaces approach \cite{morPen06},
that conjoins the most controllable and most observable subspaces into a projection.
This unbalanced model reduction method may yield larger or less accurate reduced-order systems,
yet allows a computationally advantageous formulation while also preserving stability and providing an error quantification.
The dominant subspace model reduction method has been investigated in \cite{morPen06,morLiW99,morLiW01,morShiS05,morBauBF14},
with \cite{morPen06} being the original source which is already referenced by the earlier work \cite{morLiW01}.

The approach proposed in this work,
combines the method from \cite{morPen06} with the cross Gramian (matrix) \cite{morFerN83},
which encodes controllability \emph{and} observability information of an underlying input-output system.
For this cross-Gramian-based dominant subspace method, an a-priori error indicator is developed,
and the numerical issues arising in the wake of large-scale systems are addressed,
specifically by utilizing the hierarchical approximate proper orthogonal decomposition (HAPOD) \cite{morHimLR18a}.
Compared to other cross Gramian and SVD model reduction techniques such as \cite{morJiaQY19},
the proposed method does not need multiple decompositions, but a single HAPOD.

The considered class of input-output systems are generalized linear \linebreak (time-invariant) systems\footnote{Sometimes, the term descriptor system is used for this type of system,
yet typically descriptor systems explicitly allow a singular mass matrix.
Hence, we decided to use the term \emph{generalized linear system}.},
mapping input $u:\R \to \R^M$ via the state $x:\R \to \R^N$ --- a solution to an ordinary differential equation --- to the output $y:\R \to \R^Q$:
\begin{align}\label{eq:sys}
\begin{split}
 E\dot{x}(t) &= Ax(t) + Bu(t), \\
        y(t) &= Cx(t),
\end{split}
\end{align}
with a system matrix $A \in \R^{N \times N}$, an input matrix $B \in \R^{N \times M}$,
an output matrix $C \in \R^{Q \times N}$ and a mass matrix $E \in \R^{N \times N}$.
In the scope of this work, we assume $E$ to be \emph{non-singular} as well as the matrix pencil $(A,E)$ to be asymptotically stable,
meaning the eigenvalues of the associated generalized eigenproblem lie in the open left half-plane.
This type of system arises, for example, in spatial discretizations of partial differential equations using the finite element method.

In \cref{sec:cg} the cross Gramian for generalized linear systems is introduced,
followed by \cref{sec:mr}, briefly describing projection-based model reduction,
and extending the dominant subspace projection method to the cross Gramian together with an error indicator.
The proposed model reduction technique is then tested numerically in \cref{sec:nr} and a summary is given in \cref{sec:conc}.

\pagebreak

\section{Generalized Cross Gramian}\label{sec:cg}
In this section, the cross Gramian matrix, introduced in \cite{morFerN83},
is briefly reviewed from the point of view of generalized linear time-invariant (LTI) systems \eqref{eq:sys}.

Fundamental to system-theoretic model reduction are the controllability and observability operators \cite{morAnt05},
which are given for \eqref{eq:sys} by the generalized controllability operator $\mathcal{C}: L_2 \to \R^N$ and the generalized observability operator $\mathcal{O}: \R^N \to L_2$:
\begin{align*}
 \mathcal{C}(u) &:= \int_0^\infty \e^{E^{-1} At} E^{-1} Bu(t) \D t, \\
 \mathcal{O}(x_0) &:= C \e^{E^{-1} At} E^{-1} x_0.
\end{align*}
The (generalized) cross Gramian\footnote{Note that the term \emph{generalized cross Gramian} is used in \cite{morSha12} for cross Gramians of unstable systems.}
is then defined as a composition of the generalized controllability and observability operators:
\begin{align}\label{eq:wx}
 W_X := \mathcal{C} \circ \mathcal{O} = \int_0^\infty \e^{E^{-1} At} E^{-1} BC \e^{E^{-1} At} E^{-1} \D t \in \R^{N \times N},
\end{align}
and jointly quantifies controllability and observability of square systems -- systems with the same number of inputs and outputs $M = Q$.
For linear, square systems with $E = I$, the cross Gramian solves a Sylvester matrix equation \cite{morFerN83};
for $E \neq I$, the generalized cross Gramian solves a Sylvester-type equation:
\begin{align*}
 A W_X E + E W_X A = -B C,
\end{align*}
which can be shown using integration-by-parts of \eqref{eq:wx}:
\begin{align*}
 W_X &= \int_0^\infty \e^{E^{-1} At} E^{-1} BC \e^{E^{-1} At} E^{-1} \D t \\
     &= (E^{-1} A)^{-1} \e^{E^{-1} At} E^{-1} BC \e^{E^{-1} At} E^{-1} \Big|_0^\infty \\
     &\quad - (E^{-1} A)^{-1} \int_0^\infty \e^{E^{-1} At} E^{-1} BC \e^{E^{-1} At} (E^{-1} A) E^{-1} \D t \\
 &\Rightarrow A W_X = E \e^{E^{-1} At} E^{-1} BC \e^{E^{-1} At} E^{-1} \Big|_0^\infty - E W_X A E^{-1} \\
 &\Rightarrow A W_X E + E W_X A = E \e^{E^{-1} At} E^{-1} BC \e^{E^{-1} At} \Big|_0^\infty = -BC.
\end{align*}
Besides the cross Gramian, the (generalized) controllability Gramian $W_C := \mathcal{CC^*}$
and (generalized) observability Gramian $W_O := \mathcal{O^* O}$ are defined accordingly \cite{Saa09,morSty04}.
For systems with a symmetric Hankel operator $H := \mathcal{OC}$, $H = H^*$ \cite{morOpmR15},
for example all SISO (Single-Input-Single-Output) systems,
the (generalized) cross Gramian has the property:
\begin{align}\label{eq:hsv}
 W_X W_X = \mathcal{COCO} = \mathcal{C(OC)^*O} = \mathcal{CC^*O^*O} = W_C W_O.
\end{align}

\pagebreak

Hence for symmetric systems, either, $W_X$ or $\{W_C, W_O\}$ can be used interchangeably,
if controllability and observability are to be concurrently evaluated.
For non-symmetric and especially non-square systems, an approximation to the cross Gramian is defined,
based on the column-wise partitioning of the input matrix $B$ and row-wise partitioning of the output matrix $C$:
\begin{align*}
 B = \begin{pmatrix} b_1 & \dots & b_M \end{pmatrix}, \quad C = \begin{pmatrix} c_1 & \dots & c_Q \end{pmatrix}^\T.
\end{align*}
For $\bar{B} := \sum_{m=1}^M b_m$ and $\bar{C} := \sum_{q=1}^Q c_q^\T$,
the non-symmetric generalized cross Gramian \cite{morHimO16} for \eqref{eq:sys} is defined as:
\begin{align}\label{eq:avgsys}
 W_Z := \int_0^\infty \e^{E^{-1} At} E^{-1} \bar{B} \bar{C} \e^{E^{-1} At} E^{-1} \D t,
\end{align}
which is the cross Gramian of the average system $(E, A, \bar{B}, \bar{C})$.

The (non-symmetric) generalized cross Gramian \eqref{eq:wx} can be computed numerically,
for example, using the Hessenberg-Schur algorithm \cite{GarLAetal92},
the alternating direction implicit (ADI) algorithm \cite{BenLT09,BenK14,BenKS14b},
or as an empirical cross Gramian \cite{morHim18b}.\linebreak

\section{Model Reduction}\label{sec:mr}
One of the main numerical applications of the cross Gramian is model (order) reduction,
which aims to determine lower order surrogate systems for \eqref{eq:sys}, with respect to the state-space dimension $N := \dim(x(t))$.
The reduced order model (ROM) with $x_r : \R \to \R^n$, $n \ll N$,
\begin{align*}
 E_r \dot{x}_r(t) &= A_r x_r(t) + B_r u(t), \\
     \tilde{y}(t) &= C_r x_r(t),
\end{align*}
has a reduced system matrix $A_r \in \R^{n \times n}$,
a reduced input matrix $B_r \in \R^{n \times M}$,
a reduced output matrix $C_r \in \R^{Q \times n}$ and a reduced mass matrix $E_r \in \R^{n \times n}$,
such that the reduced system's output $\tilde{y} : \R \to \R^Q$ approximates the full order model's output:
\begin{align*}
 \frac{\|y - \tilde{y}\|}{\|y\|} \ll 1,
\end{align*}
in a suitable norm.

Following, the projection-based dominant subspaces model reduction method is extended to exploit the cross Gramian for computation,
and the practical computation of the cross-Gramian-based dominant subspaces is discussed.

\subsection{Projection-Based Model Reduction}
A commonplace approach to construct reduced order models is mapping the state-space trajectory $x(t)$ to a lower dimensional subspace,
using a reduction operator $V_1 : \R^N \to \R^n$ and a lifting operator $U_1 : \R^n \to \R^N$ \cite{morPer16}:
\begin{align*}
 x_r(t) := V_1 x(t) \quad \rightarrow \quad x(t) \approx U_1 x_r(t).
\end{align*}
In the case of (generalized) linear systems \eqref{eq:sys}, the operators $U_1 \in \R^{N \times n}$ and $V_1 \in \R^{n \times N}$,
can be directly applied to the system components $A$, $B$, $C$ and $E$ to obtain the reduced quantities:
\begin{align}\label{eq:rom}
 A_r := V_1 A U_1, \quad B_r := V_1 B, \quad C_r := C U_1, \quad E_r := V_1 E U_1.
\end{align}
Hence, the aim is the computation of suitable reducing and lifting operators $U_1$, $V_1$,
which are typically assumed to be bi-orthogonal $V_1 U_1 = I$.
The dominant subspaces method, considered in this work, is additionally orthogonal $V_1 := U_1^\T$,
thus, the reduction process is a Galerkin projection,
which is stability preserving, if the symmetric part of the system matrix $A$ is negative definite,
and the mass matrix $E$ positive definite \cite[Sec.~II.C]{morBonD08} (strictly dissipative systems),
\begin{align}\label{eq:stab}
 A + A^\T < 0 \quad \land \quad E > 0.
\end{align}
This is a generalization of the stability preservation for systems with \linebreak $E = I$, mentioned in \cite[Sec.~4.3]{morPen06}.
If a system does not fulfill \eqref{eq:stab}, a stabilization procedure, see for example \cite[Sec.~4]{morBenHM18},
can be applied to the reduced order model.

\subsection{Dominant Subspaces}
The \emph{Dominant Subspaces Projection Model Reduction} (DSPMR) is introduced in \cite[Sec.~4.3]{morPen06}.
The idea behind DSPMR is, instead of balancing controllability and observability Gramians,
to combine the associated principal subspaces obtained from approximate system Gramians.
This yields a simple model reduction algorithm which is based upon low-rank factors of the controllability and observability Gramians.
In \cite{morPen06}, a low-rank Cholesky (LR Chol) factor is used, while \cite{morLiW99} utilizes singular vectors of a truncated singular value decomposition (tSVD),
\begin{align*}
 W_C &\stackrel{\text{LR Chol}}{\approx} Z_C Z_C^\T, &\quad W_O &\stackrel{\text{LR Chol}}{\approx} Z_O Z_O^\T, \\
 W_C &\stackrel{\text{~tSVD~}}{\approx} U_C D_C U_C^\T, &\quad W_O &\stackrel{\text{~tSVD~}}{\approx} U_O D_O U_O^\T.
\end{align*}
The controllability and observability subspaces encoded in the matrix factors are now conjoined and orthogonalized,
by either a rank-revealing SVD (\cite{morPen06}) or a rank-revealing QR-decomposition (\cite{morLiW99,morLiW01}).
Either, the left singular vectors $U$, or the $Q$ factor, can be taken as Galerkin projections, respectively:
\begin{align*}
 QR &\stackrel{\text{QR}}{=} \begin{bmatrix} U_C & U_O\end{bmatrix} \rightarrow U_1 := Q, \\
 UDV^\T &\stackrel{\text{SVD}}{=} \begin{bmatrix}U_C & U_O\end{bmatrix} \rightarrow U_1 := U,
\end{align*}
see also \cite[Sec~2.1.7]{morBauBF14}.
Compared to POD (Proper Orthogonal Decomposition) \cite[Ch.~9.1]{morAnt05},
which in this context is equivalent to using solely the controllability subspace (basis) $U_C$ as a Galerkin projection,
DSPMR incorporates controllability \emph{and observability} information.
Yet, in comparison to balanced POD \cite{morWilP02,morRow05,morOrSK12},
the truncated controllability and observability subspaces $U_C$, $U_O$ are not balanced, but directly concatenated.

An extension to the DSPMR method is also proposed in \cite{morPen06}, called \emph{Refined} Dominant Subspace Projection Model Reduction.
The eponymous refinement is given by weighting factors $\omega_C, \omega_O > 0$ for the controllability and observability subspace bases respectively.
The weighting factors are selected as the Frobenius norm of the respective low-rank factors, $\omega_C := \|Z_C\|_{\text{F}}^{-1}$ and $\omega_O := \|Z_O\|_{\text{F}}^{-1}$, yielding:
\begin{align*}
 QR &\stackrel{\text{QR}}{=} \begin{bmatrix} (\omega_C Z_C) & (\omega_O Z_O)\end{bmatrix} \rightarrow U_1 := Q, \\
 UDV^\T &\stackrel{\text{SVD}}{=} \begin{bmatrix}(\omega_C Z_C) \,\,\, (\omega_O Z_O)\end{bmatrix} \rightarrow U_1 := U.
\end{align*}
Obviously, this is only sensible for the Cholesky factor variant, as the norm of the (orthonormal) singular vectors is one.

The weighting normalizes the system Gramian factors.
This normalization equilibrates the influence of controllability ($W_C$ depends only on $\{A,B\}$) and observability ($W_O$ depends only on $\{A,C\}$),
which may be skewed, i.e., due to different scaling of $B$ and $C$.
A similar idea for combining weighted subspaces is also used in the cotangent lift method from \cite{morPenM16}.

\subsection{Cross-Gramian-Based Dominant Subspaces}
Instead of the controllability and observability Gramians,
also the cross Gramian can be used to obtain a dominant subspace projection.
A truncated SVD of the cross Gramian (based on a pre-selected rank or approximation error),
\begin{align}\label{eq:wxsvd}
 W_X \stackrel{\text{tSVD}}{=} U_X D_X V_X^\T,
\end{align}
produces left and right singular vectors aggregated in matrices $U_X$ and $V_X$,
which induce subspaces associated to controllability ($U_X$) and observability ($V_X$) of the underlying system $(A,B,C,E)$ \cite[Sec.~B]{morWon08}.

In \cite[Sec.~4.3]{morSorA02}, it is noted, that the sole use of either, $U_X$ or $V_X$, as a Galerkin projection,
will largely omit observability or controllability information respectively.
Hence, both subspaces should be incorporated in the reducing and lifting operator.
Balanced truncation, for example, determines a suitable Petrov-Galerkin projection\footnote{Balanced truncation yields a Galerkin projection for state-space symmetric systems, \linebreak $A = A^\T, B = C^\T, E = E^\T$ \cite{morBruCT02}.},
where $U_1 \neq V_1$, by simultaneous diagonalization of the controllability and observability Gramians,
while approximate balancing applies the left and right singular vectors of the cross Gramian as oblique projections directly \cite{morRahVA14}.

For the proposed variant of the dominant subspace method (for an algorithmic description see \cref{sec:algo}),
the left and right singular vectors are conjoined as before,
but also scaled \emph{column-wise} by the associated singular values:
\begin{align*}
 \begin{bmatrix}(U_X D_X) & (V_X D_X)\end{bmatrix} &\stackrel{\text{SVD}}{=} U_{CO} D_{CO} V_{CO}^\T \rightarrow U_1 := U_{CO}.
\end{align*}
Here, the singular values are used to scale the singular vectors,
since the majorization property \cite[Remark~2.1]{morSorA02} relates the singular values of the cross Gramian with the (absolute value of the) cross Gramian's eigenvalues,
which in turn are equal to the Hankel singular values of a symmetric system \eqref{eq:hsv}.
So, instead of normalizing the controllability and observability subspaces (as a whole), as in refined DSPMR,
based on the common controllability-observability measure, the singular values of the cross Gramian,
the vectors spanning the compound subspace are scaled individually.
Here explicitly a rank-revealing SVD is used, instead of a QR decomposition,
as the singular values $D_{CO}$ will be used for an error indicator in \cref{sec:staberr}.
An advantage of the cross-Gramian-based dominant subspace projection method is this common measure of minimality \cite{morFerN82},
the singular values $\sigma_i = D_{CO,ii}$ associated jointly to the ``controllability'' and ``observability'' subspaces.

\subsubsection{Algorithmic Computation}\label{sec:algo}
The computation of the proposed cross-Gramian-based dominant subspace projection,
as well as the classic dominant subspace projection consists of two phases:
First, the computation of the system Gramians, either the cross Gramian,
or the controllability and observability Gramians.
And second, the assembly of the reducing (and lifting) operator.

For large-scale systems, the computation of dense system Gramians,
which are of dimension $N \times N$, may be infeasible or at least inefficient.
To this end, low-rank representations of the Gramians can be computed,
for the cross Gramian, in example by the implicitly restarted Arnoldi algorithm \cite{morSorA02},
the factorized iteration \cite{Ben04}, a factored ADI \cite{BenK14} or a low-rank empirical cross Gramian \cite{morHimLRetal17}.

Overall, the \emph{cross-Gramian-based dominant subspace} algorithm is summarized by:

\begin{enumerate}

 \item Compute (low-rank) cross Gramian:
  \begin{enumerate}
   \item As solution to a matrix equation: $A W_X E + E W_X A = -B C$,
   \item or by quadrature: $W_X = \int_0^\infty \e^{E^{-1} At} E^{-1} BC \e^{E^{-1} At} E^{-1} \D t$.
  \end{enumerate}

 \item Compute (truncated) SVD of the cross Gramian:
  \begin{itemize}
   \item[] $U_X D_X V_X^\T \stackrel{\text{tSVD}}{=} W_X$.
  \end{itemize}

 \item Compute (rank-revealing) SVD of conjoined and weighted left and right singular vectors:
  \begin{itemize}
   \item[] $U_1 D_1 V_1 \stackrel{\text{SVD}}{=} \begin{bmatrix} (U_X D_X) & (V_X D_X) \end{bmatrix}$.
  \end{itemize}

 \item Apply left singular vectors to system matrices following \eqref{eq:rom}:
  \begin{itemize}
   \item[] $A_r := U_1^\T A U_1, \quad B_r := U_1^\T B, \quad C_r := C U_1, \quad E_r := U_1^\T E U_1$.
  \end{itemize}

\end{enumerate}

The Galerkin projection $U_1$ is the cross-Gramian-based dominant subspace projection.
In principle, a similar procedure can be conducted using controllability and observability Gramians,
yet it is not immediately clear if the SVD of the (weighted) conjoined singular vectors $\begin{bmatrix} (U_C D_C) & (U_O D_O) \end{bmatrix}$ yields an equally useful measure.

The efficiency of computing a low-rank approximation of $W_X$ depends on the rank of $BC$ and the symmetry of $A$.
Usually, this means, the more (linearly) independent inputs and outputs a system has,
and the less symmetric a system matrix is,
the higher the rank of the approximated cross Gramian.

\subsection{Error Indicator}\label{sec:staberr}
In this section an error indicator for the cross-Gramian-based dominant subspace method is developed.
Previous works, such as \cite{morShiS05,morWolPL11,morRedK18,morWanY18}, already introduced error \emph{bounds} for the Hardy $H_2$-norm.
Here, an $H_2$-error \emph{indicator} of simple structure using time-domain quantities is proposed,
which is loosely related to the simplified balanced gains approach from \cite{morDav86}.
The $H_2$-norm is particularly interesting, since an error estimation has relevance for the frequency-domain and the time-domain \cite[Ch.~2]{Tos13},
and it also describes the energy ($L_2$-norm) of the system's impulse response.
Before this error indicator is derived, a straight-forward property of the matrix exponential is presented.

\begin{lemma}\label{thm:matexp}
Given matrices $A \in \R^{N \times N}$ and $U \in \R^{N \times n}$, $n \leq N$,
the following holds:
\begin{align*}
 U \e^{U A U^\T} U^\T = UU^\T \e^{A UU^\T} = \e^{UU^\T A} UU^\T.
\end{align*}
\end{lemma}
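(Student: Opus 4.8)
The plan is to reduce the asserted chain of identities to a single statement about matrix powers and then pass to the exponential series term by term. For brevity write $P := UU^\T \in \R^{N\times N}$, and observe at the outset that no orthonormality of $U$ is needed: the only structural facts used are $UU^\T = P$ together with associativity of the matrix product, so $P$ need not be a projector.

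First I would establish, by a one-line induction on $k$, the power identity $(U^\T A U)^k = U^\T (AP)^{k-1} A\,U$ for $k \ge 1$, the inductive step consisting of inserting $UU^\T = P$ between the two copies of $A$ produced by $(U^\T A U)^{k+1} = (U^\T A U)^k\,(U^\T A U)$. Left-multiplying by $U$ and right-multiplying by $U^\T$, and separately checking the trivial case $k = 0$ (where $(U^\T A U)^0 = I_n$ and $U I_n U^\T = P$), this yields
\[
 U\,(U^\T A U)^k\,U^\T = P\,(AP)^k = (PA)^k\,P \qquad (k \ge 0),
\]
the second equality again being nothing but a reassociation of the alternating product $PAPA\cdots$.

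Next I would sum over $k$. Since $\sum_{k\ge 0}\lVert M\rVert^k/k!$ converges in any submultiplicative norm, the matrix exponential series converges absolutely, and left/right multiplication by the fixed matrices $U$, $U^\T$, $P$ may be carried out termwise; hence
\begin{align*}
 U\,\e^{U^\T A U}\,U^\T
   &= \sum_{k\ge 0}\frac{U\,(U^\T A U)^k\,U^\T}{k!}
    = \sum_{k\ge 0}\frac{P\,(AP)^k}{k!} \\
   &= P\,\e^{AP} = UU^\T\,\e^{AUU^\T},
\end{align*}
and, invoking the alternative form $P(AP)^k = (PA)^k P$ from the previous display, the same series equals $\bigl(\sum_{k\ge 0}(PA)^k/k!\bigr)P = \e^{PA}P = \e^{UU^\T A}\,UU^\T$. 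Chaining these identities gives the claim.

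I do not anticipate a genuine obstacle: all of the content sits in the power identity, and the remainder is bookkeeping. The two points that merely deserve a careful word are (i) the $k = 0$ term, for which the compressed power is the $n\times n$ identity rather than a power of $A$ in the ambient space, and (ii) the termwise manipulation of the infinite series, which is legitimate by absolute convergence together with the continuity of matrix multiplication.
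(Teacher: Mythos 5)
Your proof is correct and follows essentially the same route as the paper's: expand the matrix exponential as a power series and reassociate the products $U U^\T$ so that $U (U^\T A U)^k U^\T = (UU^\T)(A UU^\T)^k = (UU^\T A)^k (UU^\T)$. You merely make explicit (via the induction on $k$, the $k=0$ case, and the absolute-convergence justification for termwise manipulation) what the paper treats as a ``trivial consequence of associativity.''
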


\begin{proof}
The proof is a trivial consequence on the associativity of the matrix product.
\begin{align*}
 U \e^{U^\T A U} U^\T &= U (\sum_{k=0}^\infty \frac{1}{k!} (U^\T A U)^k) U^\T \\
                      &= U (I + (U^\T A U) + \frac{1}{2} (U^\T A U)(U^\T A U) + \dots) U^\T \\
                      &= UU^\T (I + A UU^\T + \frac{1}{2} A UU^\T A UU^\T + \dots) \\
                      &= UU^\T \e^{A UU^\T}.\qedhere
\end{align*}
\end{proof}

Next, the error indicator is constructed, which is derived from the $L_2$-norm of the impulse response error system,
and we assume, for ease of exposition but without loss of generality, $E = I$:
	\begin{align*}
 \begin{pmatrix}\dot{x}(t) \\ \dot{x}_r(t) \end{pmatrix} &= \begin{pmatrix} A & 0 \\ 0 & A_r \end{pmatrix} \begin{pmatrix} x(t) \\ x_r(t) \end{pmatrix} + 
 \begin{pmatrix} B \\ B_r \end{pmatrix} u(t) \\
 y_e(t) &= \begin{pmatrix} C & -C_r \end{pmatrix} \begin{pmatrix} x(t) \\ x_r(t) \end{pmatrix}.
\end{align*}
We consider only SISO systems for this error indicator and unit impulse (Dirac impulse) inputs \mbox{$u(t) \equiv \delta(t)$}, defined by the properties:
\begin{align}\label{eq:delta}
 \int \delta(t) \D t = 1, \quad \delta(t \neq 0) = 0.
\end{align}

First, the $H_2$-norm of the error system, in impulse response form, is transformed in a manner so that \cref{thm:matexp} can be applied.
Note, that the error system of a SISO system is also a SISO system with a scalar and thus symmetric impulse response:
\begin{align*}
\|y_e\|_{L_2}^2 &= \tr\Big(\int_0^\infty (\begin{pmatrix} C & -C_r \end{pmatrix} \begin{pmatrix} \e^{A t} & 0 \\ 0 & \e^{A_r t} \end{pmatrix} \begin{pmatrix} B \\ B_r \end{pmatrix})^2 \D t\Big) \\
                &= \tr\Big(\int_0^\infty (C \e^{At} B - C_r \e^{A_r t} B_r)^2 \D t\Big) \\
                &= \tr\Big(C \int_0^\infty \e^{At} BC \e^{At} - \e^{At} B C_r \e^{A_r t} U_1^\T \\
                                                              &\hskip10.2em - U_1 \e^{A_r t} B_r C \e^{A t} + U_1 \e^{A_r t} B_r C_r \e^{A_r t} U_1^\T \D t B\Big), \\
\intertext{applying the definition of the reduced quantities \eqref{eq:rom}, and subsequently the result of \cref{thm:matexp}, gives:}
\|y_e\|_{L_2}^2 &= \tr\Big(C \int_0^\infty \e^{At} BC \e^{At} - \e^{At} BC \e^{U_1 U_1^\T A t} U_1 U_1^\T \\
                                                              &\hskip10.2em - U_1 U_1^\T \e^{A U_1 U_1^\T t} BC \e^{At} \\
                                                              &\hskip10.2em + U_1 U_1^\T \e^{A U_1 U_1^\T t} BC \e^{U_1 U_1^\T A} U_1 U_1^\T \D t B\Big).
\end{align*}
The next step is approximating the matrix exponentials $\e^{AU_1 U_1^\T t}$ and $\e^{U_1 U_1^\T A t}$ by the homogeneous system's solution operator,
\begin{align*}
 \e^{AU_1 U_1^\T t} \approx \e^{At}, \quad \e^{U_1 U_1^\T A t} \approx \e^{At},
\end{align*} 
which allows to factor the previous representation to:
\begin{align*}
\|y_e\|_{L_2}^2 \approx \tr\Big(C \int_0^\infty (I - U_1 U_1^\T) (\e^{At} BC \e^{At}) (I - U_1 U_1^\T) \D t B\Big).
\end{align*}
Now, we move the projection error terms $(I - U_1 U_1^\T)$ out of the integral,
identify the resulting expression with the cross Gramian $W_X$, and exploit the cyclic permutability of the trace argument:
\begin{align*}
\|y_e\|_{L_2}^2 &\approx \tr\Big(C (I - U_1 U_1^\T) \int_0^\infty \e^{At} BC \e^{At} \D t (I - U_1 U_1^\T) B\Big) \\
                &= \tr\Big((I - U_1 U_1^\T) W_X (I - U_1 U_1^\T) BC\Big).
\end{align*}
The (full) SVD of the cross Gramian is given by adding to its truncated SVD, $W_X \stackrel{\text{tSVD}}{=} U_X D_X V_X^\T$,
(the SVD of) its truncated remainder:
\begin{align*}
 W_X \stackrel{\text{SVD}}{=} U_X D_X V_X^\T + U_2 D_2 V_2^\T.
\end{align*}
Together with an observation on the truncated SVD's singular vectors:
\begin{align*}
 U_X &= \begin{bmatrix}(U_X D_X) & (V_X D_X)\end{bmatrix} \begin{bmatrix} D_X^{-1} \\ 0 \end{bmatrix} = U_1 D_1 V_1^\T \begin{bmatrix} D_X^{-1} \\ 0 \end{bmatrix}, \\
 V_X &= \begin{bmatrix}(U_X D_X) & (V_X D_X)\end{bmatrix} \begin{bmatrix} 0 \\ D_X^{-1} \end{bmatrix} = U_1 D_1 V_1^\T \begin{bmatrix} 0 \\ D_X^{-1} \end{bmatrix},
\end{align*}
the following simplification entails:
\begin{align*}
 & (I - U_1 U_1^\T) W_X (I - U_1 U_1^\T) \\
=& (I - U_1 U_1^\T) (U_X D_X V_X^\T + U_2 D_2 V_2^\T) (I - U_1 U_1^\T) \\
=& (I - U_1 U_1^\T) (U_2 D_2 V_2^\T) (I - U_1 U_1^\T).
\end{align*}
Next, the \textsc{von Neumann}'s trace inequality \cite{Mir75},
which assumes (without loss of generality) descendingly ordered singular values $\sigma_k(\cdot) \geq \sigma_{k+1}(\cdot)$ is applied,
followed by the Cauchy-Schwarz inequality (with $\sigma(\cdot)$ being the vector of singular values):
\begin{align*}
\|y_e\|_{L_2}^2 &\approx \tr\Big((I - U_1 U_1^\T) (U_2 D_2 V_2) (I - U_1 U_1^\T) BC\Big) \\
                &\leq \sum_{k=1}^N \sigma_k(D_2) \sigma_k(BC) = \langle \sigma(D_2), \sigma(BC) \rangle \\
                &\leq \|\sigma(D_2)\|_2 \|\sigma(BC)\|_2 = \|D_2\|_F \,\, \|BC\|_F.
\end{align*}
Since the singular values of $D_2$ correspond to the truncated tail of the cross Gramian's singular values,
and $BC$ is of rank one, due to the SISO nature of the system, we obtain:
\begin{align}\label{eq:errin}
  \|y_e\|_{L_2}^2 &\lessapprox \|BC\|_2 \sqrt{\sum_{k=n+1}^N \sigma_k^2(W_X)}.
\end{align}
Note, that $\|BC\|_2 = \|B\|_2 \|C\|_2$, as $B$ and $C^\T$ are column vectors.
Overall, this derivation yields the following error indicator:

\begin{errorin}
The $L_2$ impulse response model reduction error for a cross-Gramian-based dominant subspaces reduced order model is approximated by:
\begin{align}\label{eq:errind}
 \|y - \tilde{y}\|_{L_2} &\lessapprox \sqrt{\|B\|_2 \|C\|_2 \sqrt{\sum_{k=n+1}^N \sigma_k^2(W_X)}}.
\end{align}
\end{errorin}

One might assume that the spectral norm representation of the error indicator would be more convenient,
yet in \cref{sec:fc} we will show the advantage of the final Frobenius norm form.

\begin{remark}
Using the Cauchy-Schwarz inequality as in \cite{morGugAB08},
this impulse response error indicator can be extended to squarely integrable inputs $u \in L_2$.
\end{remark}

This error indicator could be extended directly to square MIMO systems,
yet the Frobenius norm estimation could not be used anymore,
which is essential to the practical computation detailed in \cref{sec:fc}.
Alternatively, it extends to any MIMO systems by either using the averaged system $(A,\sum_{i=1}^M b_i, \sum_{j=1}^Q c_j^\T)$
associated to the non-symmetric cross Gramian \eqref{eq:avgsys} \cite{morHimO16},
or, selecting a SISO sub-system $(A,b_k,c_\ell^\T)$, for example based on:
\begin{align*}
  (k,\ell) = \argmax_{i,j} \langle |b_i|, |c_j| \rangle.
\end{align*}
Furthermore, the error indicator holds also for systems with $E \neq I$, $E > 0$:
\begin{align*}
 \|y - \tilde{y}\|_{L_2} &\lessapprox \sqrt{\|E^{-1} B\|_2 \|C\|_2 \sqrt{\sum_{k=n+1}^N \sigma_k^2(W_X)}},
\end{align*}
which follows from: $E\dot{x}(t) = Ax(t) + Bu(t) \Rightarrow \dot{x}(t) = E^{-1}Ax(t) + E^{-1}Bu(t)$.

\subsection{Fused Computation}\label{sec:fc}
Even for moderately sized systems, the computation of the (cross) Gramian's singular vectors may be a computationally challenging task\footnote{For the presented numerical examples the SVDs of system Gramians comprises the dominant fraction of computation time.}.
To compute the dominant subspace projections from the cross Gramian, or the controllability and observability Gramians,
the hierarchical approximate proper orthogonal decomposition (HAPOD) \cite{morHimLR18a} is used.

The HAPOD enables a swift computation of left singular vectors of arbitrary partitioned data sets,
based on a selected projection error (on the input data) $\varepsilon > 0$ and a tree hierarchy with the data (Gramian) partitions as leafs.
The tree hierarchy utilized for the experiments in this work is given by a combination of special topologies discussed in \cite{morHimLR18a},
the incremental HAPOD (maximally unbalanced binary tree) and the distributed HAPOD (star).
Two incremental HAPODs are performed for the Gramian partitions respectively
and subsequently a distributed HAPOD of the resulting singular vectors from both sub-trees yields the dominant subspace projection.
\cref{fig:hapod} illustrates the overall HAPOD tree.

Since the HAPOD computes only left singular vectors,
but the right singular vectors of the cross Gramian are also needed,
the HAPOD of the cross Gramian (left singular vectors) and the transposed cross Gramian (right singular vectors) is computed.
In the following numerical examples, the (full-order) empirical linear cross Gramian \cite[Sec.~3.1.3]{morHim18b} is used,
as in-memory storage of the Gramian(s) is possible.
For settings, where only parts of the cross Gramian can be kept in memory,
the low-rank empirical cross Gramian \cite{morHimLRetal17} for the left singular vectors,
and the low-rank empirical cross Gramian of the \textbf{adjoint system} for the right singular vectors, can be utilized,
since the cross Gramian of the adjoint system is equal to the system's transposed cross Gramian:
\begin{align*}
 \widetilde{W}_X &:= \int_0^\infty \e^{E^{-\T}A^\T t} E^{-\T} C^\T B^\T \e^{E^{-\T} A^\T t} E^{-\T} \D t \\
                 &= \int_0^\infty (C E^{-1}\e^{A E^{-1} t})^\T (E^{-1} \e^{A E^{-1} t} B)^\T \D t \\
                 &\hskip-1.1em\stackrel{\text{Lemma~1}}{=} \int_0^\infty (C \e^{E^{-1} At} E^{-1})^\T (\e^{E^{-1} At} E^{-1} B)^\T \D t \\
                 &= \int_0^\infty (\e^{E^{-1} At} E^{-1} BC \e^{E^{-1} At} E^{-1})^\T \D t = W_X^\T.
\end{align*}

\begin{figure}\centering
\includegraphics[width=.82\textwidth]{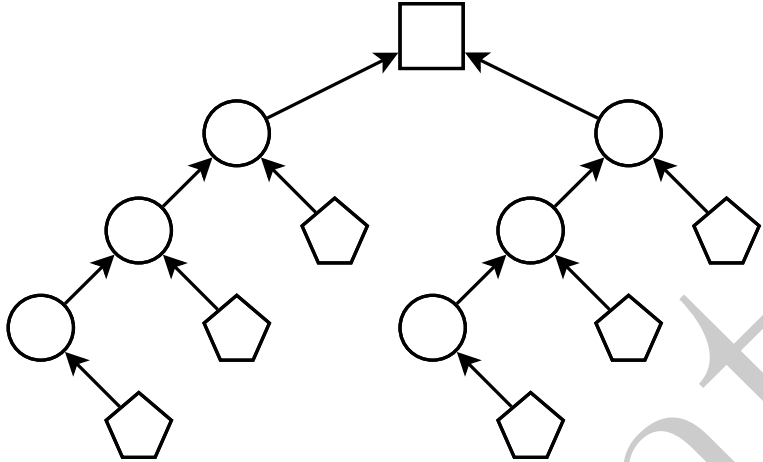}
\caption{HAPOD tree topology for the cross-Gramian-based dominant subspaces method.
Pentagons symbolize partitions of the cross Gramian (left) and the adjoint cross Gramian (right), respectively.
Circles mark sub-PODs, while a square represents the root-POD returning the overall (HA)POD described in \cref{sec:fc}.
In the context at hand, PODs correspond to SVDs.}
\label{fig:hapod}
\end{figure}

Since a projection-error-driven SVD method is used,
the following error bound holds\footnote{This is shown for the HAPOD in \cite{morHimLR18a}.
Specifically, the mean $L_2$ projection error, $\frac{1}{N} \sum_{k=1}^N \|(I-U_1 U_1^\T) W_{X,*k}\|^2 \leq \varepsilon^2$, is bounded by the HAPOD,
which has to be taken into account for the practical computation.},
for a given projection error $\varepsilon>0$:  
\begin{align*}
 \sum_{k=1}^N \|(I-U_1 U_1^\T) W_{X,*k}\|^2 = \sum_{k=n+1}^N \sigma_k^2(W_X) \leq \varepsilon^2.
\end{align*}
This means the error indicator \eqref{eq:errin} can be bounded using the prescribed cross Gramian's projection error,
\begin{align}\label{eq:apriori}
 \|y - \tilde{y}\|_{L_2} &\lessapprox \sqrt{\|B\|_2 \|C\|_2 \sqrt{\sum_{k=n+1}^N \sigma_k^2(W_X)}} \leq \sqrt{\varepsilon \|B\|_2 \|C\|_2},
\end{align}
thus making it an \emph{a-priori} error indicator.
This approximate error prediction for a given projection error $\varepsilon$
and the Euclidean norms (spectral norms) of the input and output operators, without computing any system Gramians, is the main advantage of this method.
The tightness of this error indicator is evaluated in the following numerical results.

\section{Numerical Results}\label{sec:nr}
Following, two numerical examples are presented to illustrate the previous findings.
These numerical experiments are conducted using MATLAB~2018a \cite{matlabweb}.
The system Gramians needed for the dominant subspace methods,
the controllability and observability Gramian for plain and refined DSPMR as well as balanced truncation,
and the cross Gramian for the cross-Gramian-based dominant subspaces,
are computed as empirical Gramians \cite{morHim18b} using \texttt{emgr} -- empirical Gramian framework in version 5.7 \cite{morHim19a}.
All simulated trajectories for the construction of these \emph{empirical dominant subspaces} are computed using the implicit Euler method,
and the HAPOD is computed via \cite{morHimR19}.

\subsection{FOM Benchmark}\label{sec:lint}
The first numerical example compares the cross-Gramian-based dominant subspace method with the classic unrefined and refined dominant subspace method\footnote
{The refined DSPMR method is computed with weighting coefficients $\omega_C =  \frac{\|Z_O\|_F}{\|Z_C\|_F}$, \linebreak $\omega_O = 1$ of the controllability and observability factors respectively for numerical reasons.}
as well as (empirical) balanced truncation\footnote{In the numerical experiments at hand,
low-rank Gramians are balanced, whereas the rank is determined by the projection error of the POD compression of the empirical controllability and observability Gramian.
In this sense, this method is related to balanced POD \cite{morRow05}.}
for the ``FOM'' example in \cite{morPen06}, which is also part of the SLICOT Benchmark Collection \cite{morChaV02}.
This linear SISO system (with $E = I$) of the structure:
\begin{align*}
 \dot{x}(t) &= Ax(t) + Bu(t), \\
       y(t) &= Cx(t),
\end{align*}
is of order $N = 1006$, and the system components are given by:
\begin{align*}
 A_1 &= \begin{pmatrix} -1 & 100 \\ -100 & -1 \end{pmatrix}, \quad
 A_2 = \begin{pmatrix} -1 & 200 \\ -200 & -1 \end{pmatrix}, \quad
 A_3 = \begin{pmatrix} -1 & 400 \\ -400 & -1 \end{pmatrix}, \\
 A_4 &= \begin{pmatrix} -1 \\ & -2 \\ & & \ddots \\ & & & -1000 \end{pmatrix}, \quad
 A = \begin{pmatrix} A_1 \\ & A_2 \\ & & A_3 \\ & & & A_4 \end{pmatrix}, \\
 C &= \begin{pmatrix} C_1 & C_2 \end{pmatrix}, \quad
 C_1 = \begin{pmatrix} 10 & \dots & 10 \end{pmatrix} \in \R^{6}, \quad
 C_2 = \begin{pmatrix} 1 & \dots & 1 \end{pmatrix} \in \R^{1000}, \\
 B &= C^\T.
\end{align*}
The empirical Gramians are constructed using random binary input,
and the reduced systems are tested with impulse input, to evaluate the error indicator.

In \cref{fig:numex1}, the (empirical) balanced truncation, the (empirical) dominant subspaces method,
the (empirical) refined dominant subspaces method, the (empirical) cross-Gramian-based dominant subspaces method, the predicted error \eqref{eq:apriori} and the error indicator \eqref{eq:errind} are compared,
for a given projection error $\varepsilon \in \{ 10^{-3}, \dots, 10^{-12} \}$ of the utilized empirical cross Gramians.
The same projection error is selected for the controllability and observability Gramians used by the unrefined DSPMR, refined DSPMR and low-rank empirical balanced truncation.

\begin{figure}\centering

\begin{subfigure}{.9\textwidth}
 \includegraphics[width=\textwidth]{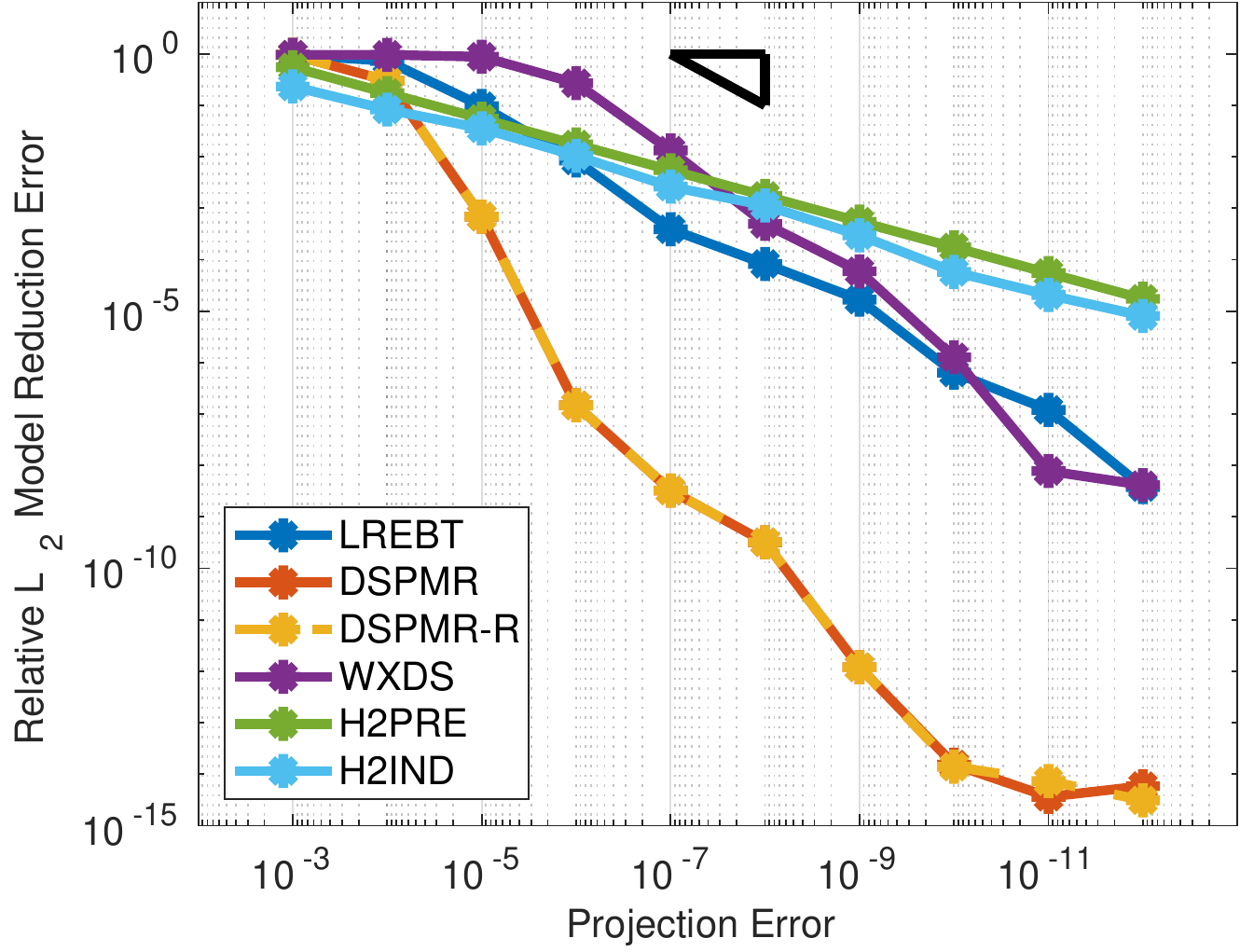}
 \caption{Cross Gramian projection error versus $L_2$ model reduction error.}
 \label{fig:numex1a}
\end{subfigure}

\begin{subfigure}{.9\textwidth}
 \includegraphics[width=\textwidth]{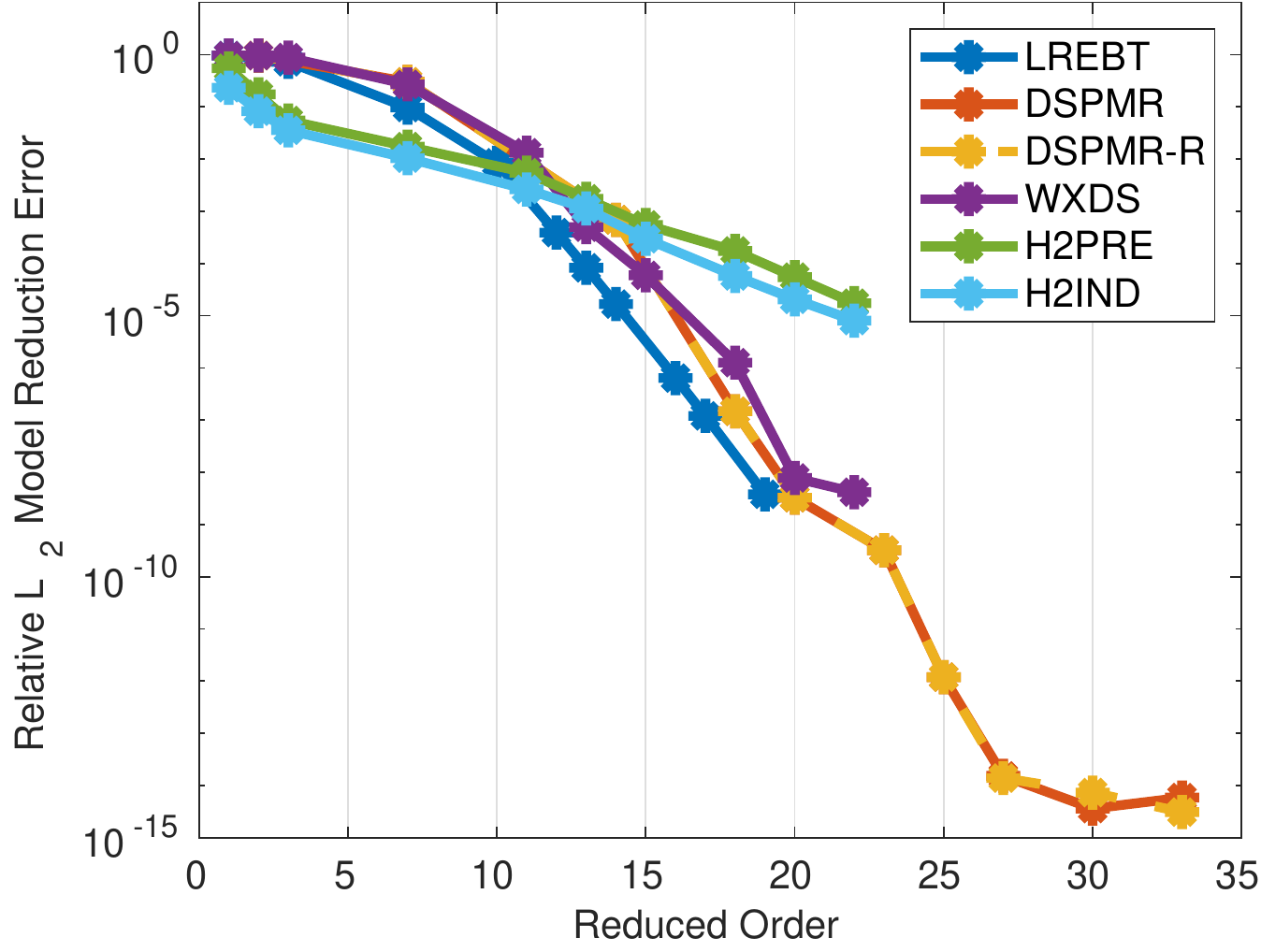}
 \caption{Reduced order versus $L_2$ model reduction error.}
 \label{fig:numex1b}
\end{subfigure}
\caption{Model reduction error of the \textbf{FOM benchmark} example from \cref{sec:lint} for low-rank empirical balanced truncation (LREBT),
         dominant subspaces (DSPMR), refined dominant subspaces (DSPMR-R), cross-Gramian-based dominant subspaces (WXDS), the predicted $H_2$-error (H2PRE) and the $H_2$-error indicator (H2IND).}
\label{fig:numex1}
\end{figure}

In \cref{fig:numex1a} the prescribed projection error of the respective Gramians is plotted against the resulting relative $L_2$ model reduction error.
For a given projection error, the refined DSPMR and DSPMR-R method produce the lowest model reduction error, and low-rank balanced truncation the largest,
while the proposed cross-Gramian-based dominant subspace method is in-between.
The error indicator overestimates the error for larger and underestimates for smaller projection errors,
the predicted error is reasonably close to the error indicator.
Note, that the error indicator just scales the projection error by a constant,
hence it appears as a line in the log-log plot.

\cref{fig:numex1b} depicts the resulting reduced order of the tested methods against the model reduction error.
Balanced truncation produces the smallest, and DSPMR, DSPMR-R the largest reduced models, again the cross-Gramian-based method is in-between.
These results follow intuitions that DSPMR produces the most accurate, but largest subspaces,
while balanced truncation may have a smaller, and hence less accurate subspaces.
Hence, the cross-Gramian-based dominant subspace method appears as a compromise.
The error indicator is rather coarse, which is due to its simple structure.

\subsection{Convection Benchmark}\label{sec:conv}
The second numerical example evaluates the convection benchmark \cite[Convection]{morwiki}\footnote{\url{http://modelreduction.org/index.php/Convection}}
from the Oberwolfach Benchmark Collection \cite{morMooG05}.
This is a two-dimensional computational fluid dynamics application of thermal flow modeled by a convection-diffusion partial differential equation:
\begin{align*}
 \frac{\partial T}{\partial t} = \kappa \nabla^2 T -  v \nabla T + \dot{q}
\end{align*}
with the solution temperature $T(x,t)$, the thermal conductivity $\kappa$, the fluid speed $v$ of fixed direction, and the heat generation rate $\dot{q}$.
The model is discretized in space using the finite element method, yielding a generalized linear system \eqref{eq:sys} of order $N = 9669 \approx 10^4$,
a single input $M = 1$, five outputs $Q = 5$ and $E \neq I$.
For a more detailed description of this benchmark see \cite{morMooG05} and references therein.
This model is tested in two variants:
First, in a symmetric setting with zero flow speed $v = 0$, and second,
in a non-normal setting with a flow speed $v = 0.5$.
Due to the MIMO nature of the system we use the average system (see \eqref{eq:avgsys}) for the error indicator computation.

This set of experiments is organized in the same manner as \cref{sec:lint},
but conducted for the prescribed projection errors $\varepsilon \in \{ 10^{-2}, \dots, 10^{-8} \}$.
As indicated in \cref{sec:staberr},
the average system (averaged over outputs) $\bar{C} := \sum_{q=1}^Q c_q^\T$ is used for the computation of the error indicator. 
The resulting reduced order models are tested with impulse input $u(t) = \delta(t)$.

\subsubsection{Symmetric Variant}
The experimental results of the symmetric variant ($v = 0$) are depicted in \cref{fig:numex2}.
In \cref{fig:numex2a} the prescribed projection error for the (empirical) system Gramians versus the resulting relative $L_2$ model reduction error is plotted.
As in \cref{sec:lint}, the DSPMR method produces the reduced order models with the lowest model reduction error.
The refined DSPMR exhibits slightly larger model reduction errors for small projection errors, otherwise it is following the plain DSPMR method.
Reduced systems from (empirical) balanced truncation and the (empirical) cross-Gramian-based dominant subspace method result in similar errors,
while the error indicator behaves like a upper bound to the cross-Gramian-based model reduction error.

\begin{figure}\centering

\begin{subfigure}{.9\textwidth}
 \includegraphics[width=\textwidth]{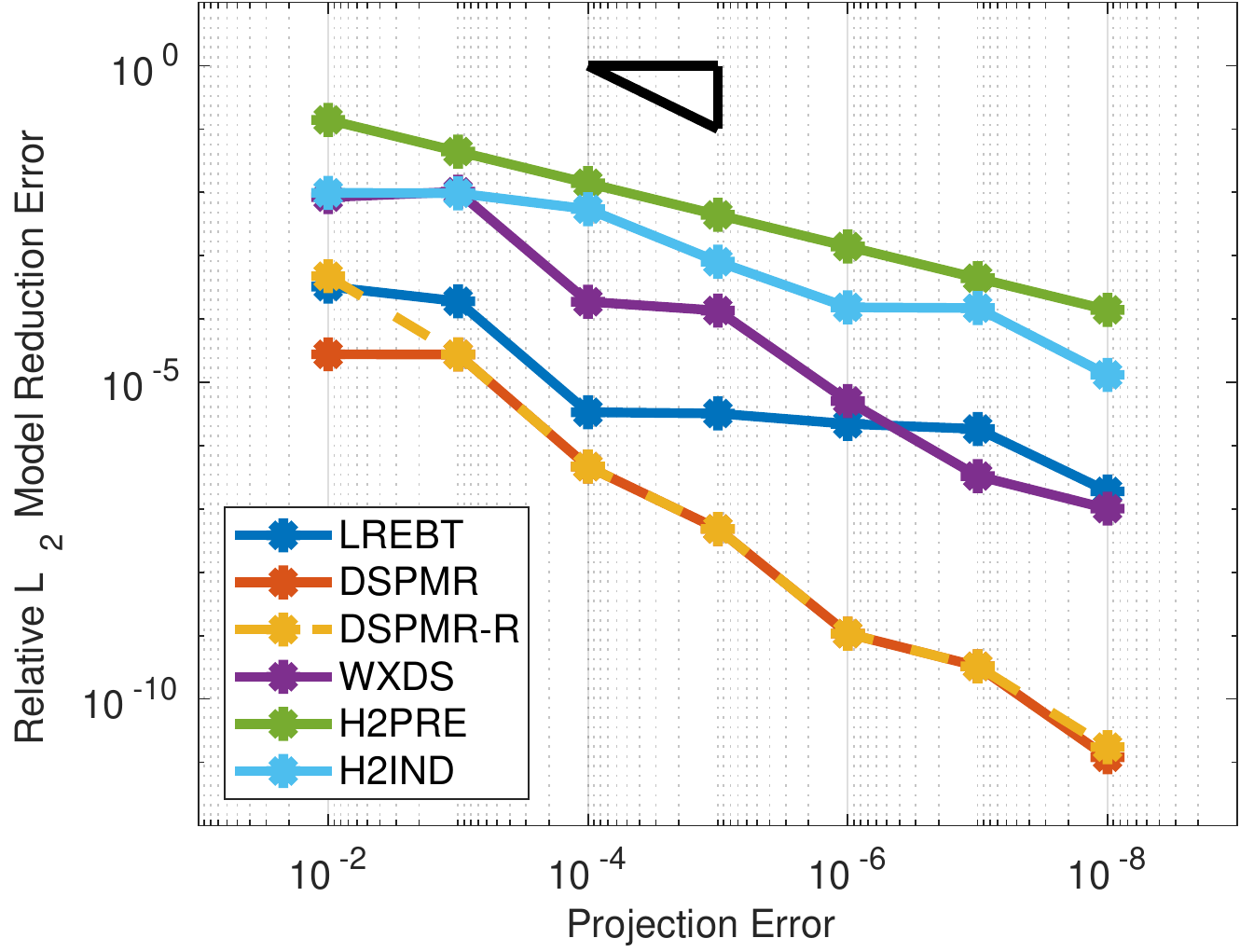}
 \caption{Cross Gramian projection error versus $L_2$ model reduction error.}
 \label{fig:numex2a}
\end{subfigure}
\begin{subfigure}{.9\textwidth}

 \includegraphics[width=\textwidth]{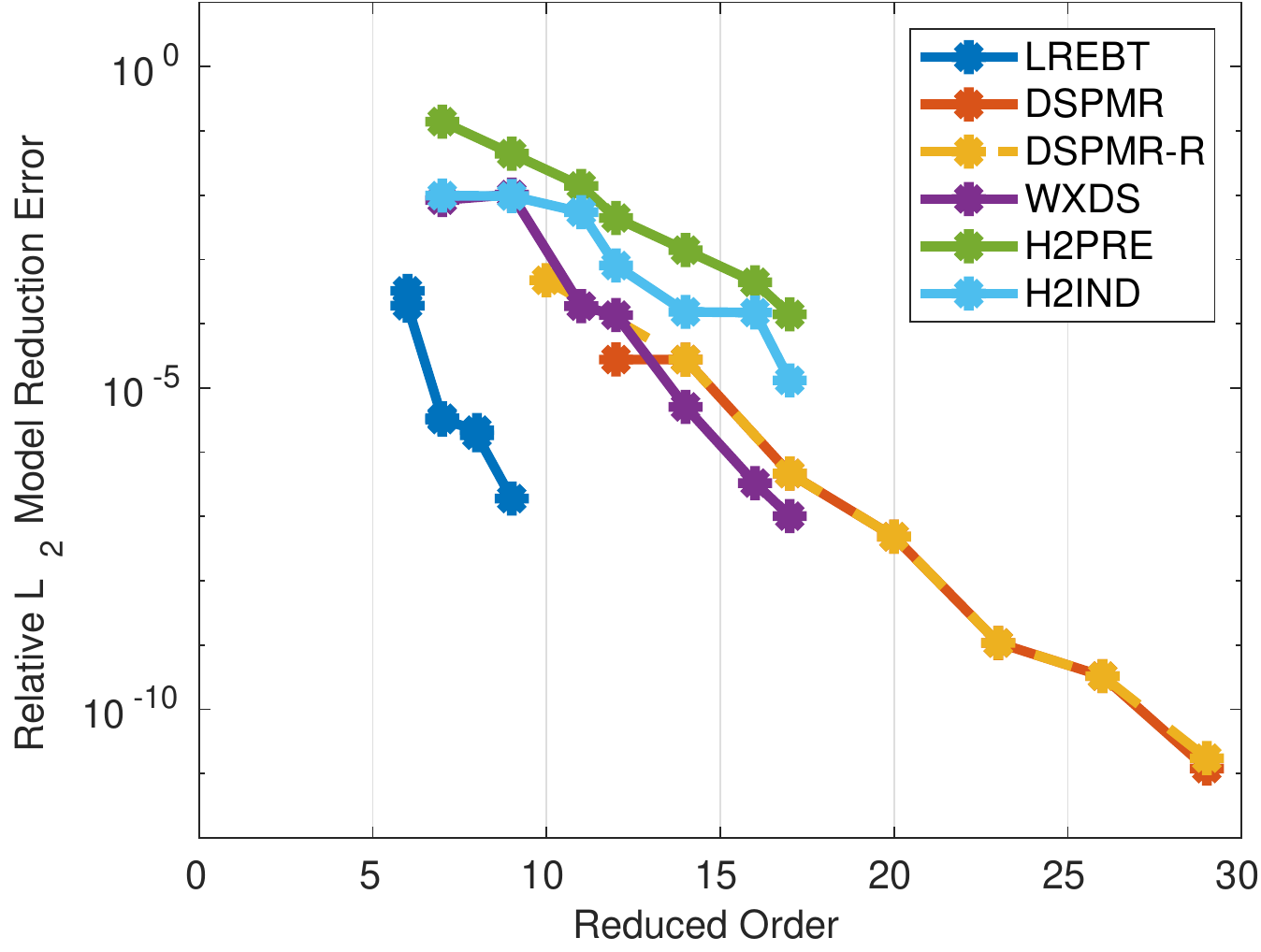}
 \caption{Reduced order versus $L_2$ model reduction error.}
 \label{fig:numex2b}
\end{subfigure}
\caption{Model reduction error of the \textbf{symmetric} convection benchmark from \cref{sec:conv} for low-rank empirical balanced truncation (LREBT),
         dominant subspaces (DSPMR), refined  dominant subspaces (DSPMR-R), cross-Gramian-based dominant subspaces (WXDS), the predicted $H_2$-error (H2PRE) and the $H_2$-error indicator (H2IND).}
\label{fig:numex2}
\end{figure}

\cref{fig:numex2b} shows the model reduction error for the reduced orders resulting from the prescribed projection error.
Balanced truncation achieves the smallest and DSPMR the largest reduced models,
the cross-Gramian-based dominant subspace method reduced order model dimension lies in between,
and the error indicator shows a similar behavior as the latter.

\subsubsection{Non-Normal Variant}
The experimental results of the non-normal variant ($v = 0.5$) are presented in \cref{fig:numex3}.
Overall, the plots \cref{fig:numex3a} and \cref{fig:numex3b} are similar to the symmetric variant,
with a reasonably close predicted error, which is equal for symmetric and non-normal benchmark variants.
Yet, in case of the non-normal benchmark variant, the error indicator is not as tight, compared to the symmetric variant.

\begin{figure}\centering

\begin{subfigure}{.9\textwidth}
 \includegraphics[width=\textwidth]{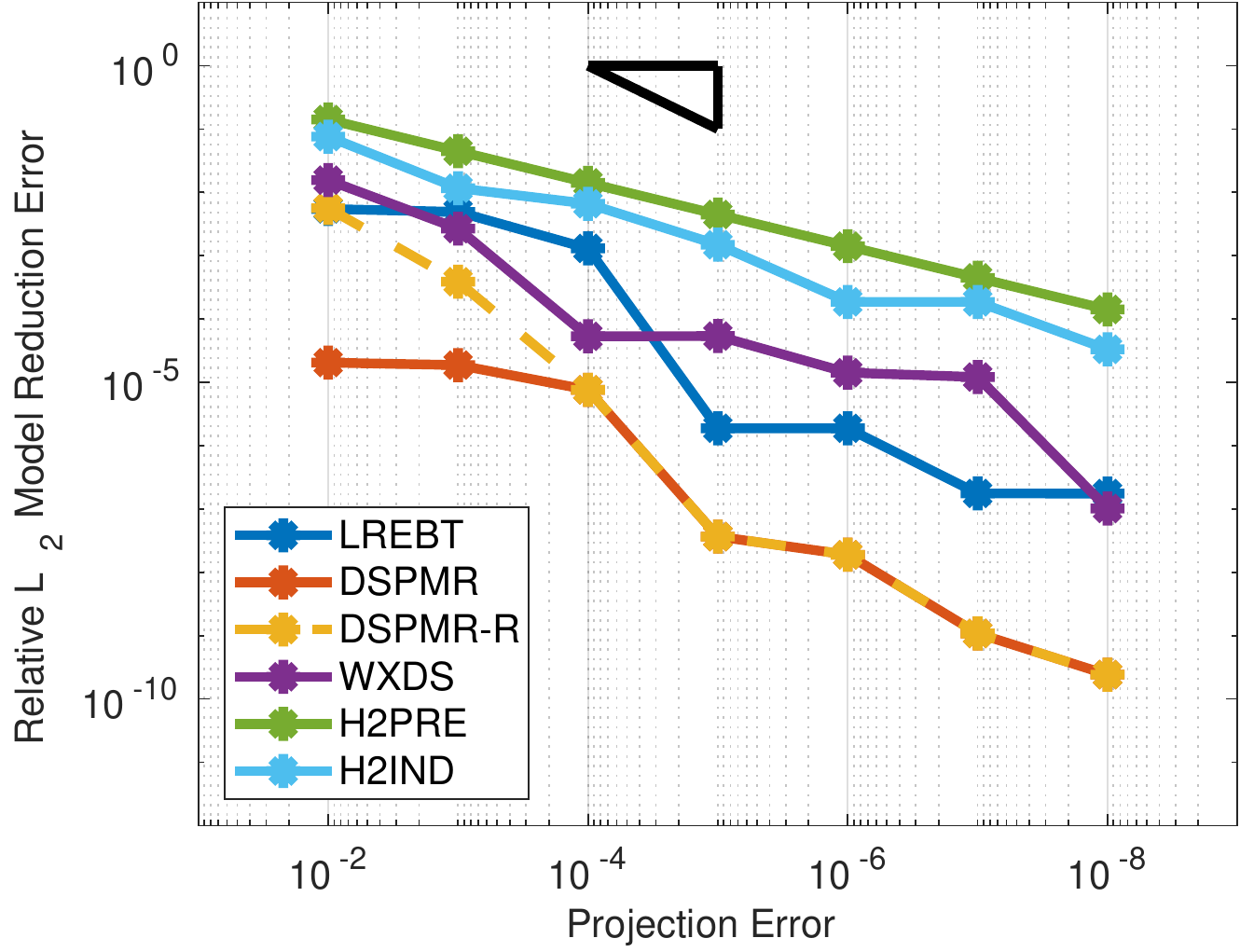}
 \caption{Cross Gramian projection error versus $L_2$ model reduction error.}
 \label{fig:numex3a}
\end{subfigure}

\begin{subfigure}{.9\textwidth}
 \includegraphics[width=\textwidth]{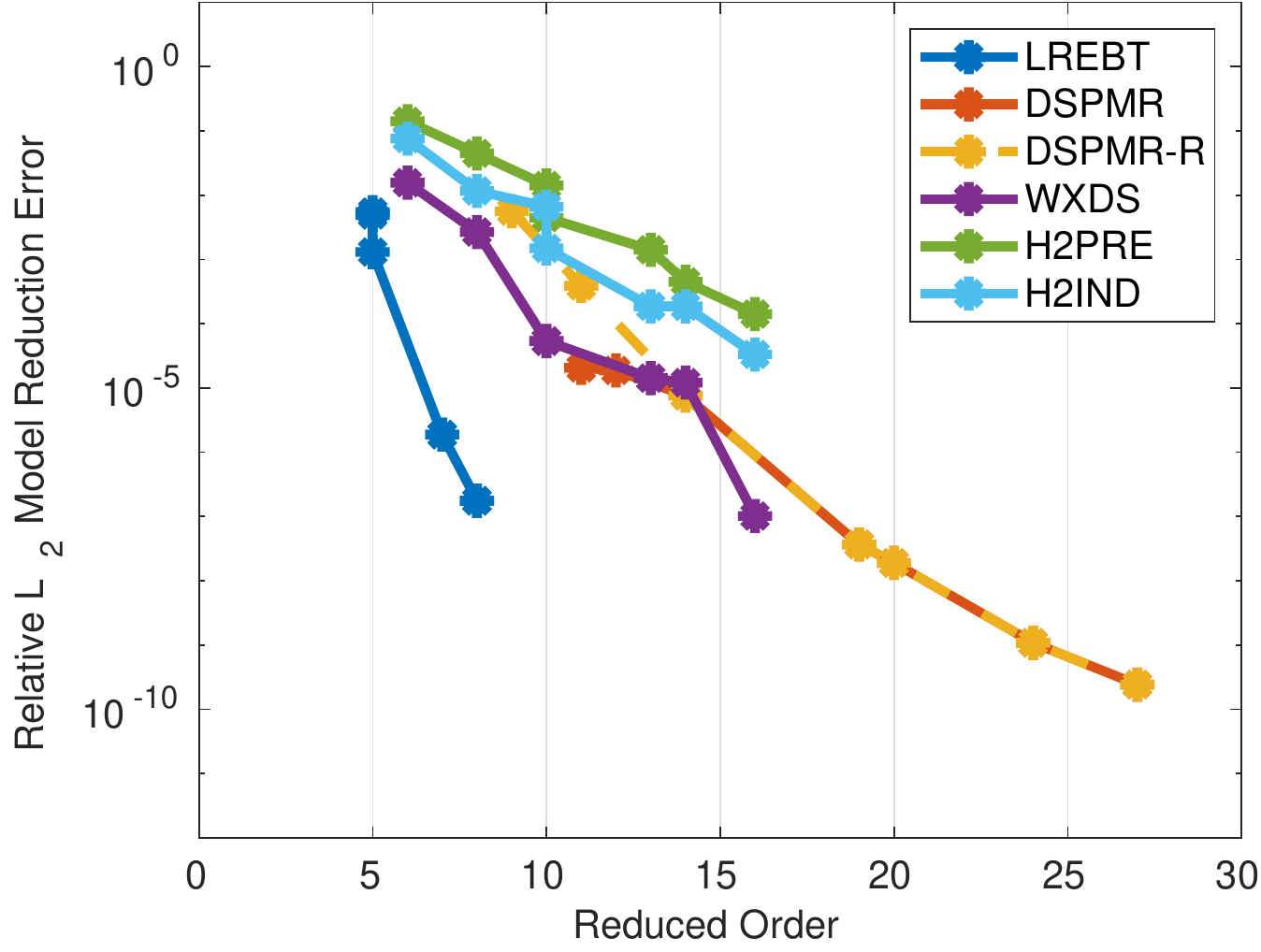}
 \caption{Reduced order versus $L_2$ model reduction error.}
 \label{fig:numex3b}
\end{subfigure}
\caption{Model reduction error of the \textbf{non-normal} convection benchmark from \cref{sec:conv} for low-rank balanced truncation (LREBT),
        dominant subspaces (DSPMR), refined  dominant subspaces (DSPMR-R), cross-Gramian-based dominant subspaces (WXDS), the predicted $H_2$-error (H2PRE) and the $H_2$-error indicator (H2IND).}
\label{fig:numex3}
\end{figure}

\section{Summary}\label{sec:conc}
In this work we revisited the dominant subspaces projection model reduction method,
and presented a variant based on the cross Gramian matrix for generalized linear systems.
This model reduction algorithm requires only a single low-rank (HAPOD) decomposition of the cross Gramian,
and provides an \linebreak \emph{a-priori} error indicator.
Overall, the cross-Gramian-based dominant subspaces technique is a system-theoretic model reduction method with a simple formulation,
efficient computation, conditional stability preservation and error quantification.
The error indicator for the cross-Gramian-based dominant subspace model reduction could be enhanced,
for example by fitting the known singular values exponentially and incorporate such an empirical decay rate.
The applicability of this method to control-affine nonlinear systems will be subject of future work,
which is in principal possible due to the utilized empirical Gramian computation leading to \emph{empirical dominant subspaces}.

\section*{Code Availability Section}
The source code of the presented numerical examples can be obtained from:
\begin{center}
\url{http://runmycode.org/companion/view/3270}
\end{center}
and is authored by: \textsc{Christian Himpe}.

\section*{Acknowledgement}
Supported by the German Federal Ministry for Economic Affairs and
Energy (BMWi), in the joint project: ``MathEnergy -- Mathematical
Key Technologies for Evolving Energy Grids'', sub-project:
Model Order Reduction (Grant number: 0324019\textbf{B}).


\vskip1ex

This work is dedicated to the late Thilo Penzl,
who wrote the preprint version of \cite{morPen06} twenty years (at this writing) ago, in 1999,
and, moreover,  2019 marks the year of his 20\textsuperscript{th} death anniversary.
Thilo Penzl died December 17\textsuperscript{th}, 1999, but his work and ideas inspire researchers in model reduction and matrix equations to date.

\pagebreak

\bibliographystyle{plainurl}
\bibliography{mor,csc,software} 

\end{document}